\newtheorem{theorem}{Theorem}
\newtheorem{lemma}[theorem]{Lemma}
\newtheorem{corollary}[theorem]{Corollary}
\newtheorem{proposition}[theorem]{Proposition}
\newtheorem{definition}[theorem]{Definition}
\newtheorem{example}[theorem]{Example}
\newtheorem{remark}[theorem]{Remark}
\newenvironment{proof}{\noindent{\em Proof:}}{$\Box$~\\}
\setlist{nosep} % or \setlist{noitemsep} to leave space around whole list
\newcommand{\nfn}{\textsf{NF}\xspace}
\newcommand{\magma}{\textsc{Magma}\xspace}
\newcommand{\julia}{\textsc{Julia}\xspace}
\newcommand{\singular}{\textsc{Singular}\xspace}
\newcommand{\oscar}{\textsc{OSCAR}\xspace}
\newcommand{\gbjl}{\textsc{GB.jl}\xspace}
\newcommand{\gbl}{\textsc{GB}\xspace}
\newcommand*{\defeq}{\mathrel{\vcenter{\baselineskip0.5ex \lineskiplimit0pt
                     \hbox{\scriptsize.}\hbox{\scriptsize.}}}%
                     =}
\DeclareMathOperator{\kidx}{\mathcal{K}}
\DeclareMathOperator{\lidx}{\mathcal{L}}
\DeclareMathOperator{\headSym}{lt}
\DeclareMathOperator{\headmSym}{lm}
\DeclareMathOperator{\lcm}{lcm}
\newcommand{\n}{\ensuremath{n}\xspace}
\newcommand{\lc}[1]{\mathrm{lc}\left({#1}\right)}
\newcommand{\hd}[1]{\headSym\left({#1}\right)}
\newcommand{\hm}[1]{\headmSym\left({#1}\right)}
\newcommand{\tail}[1]{\ensuremath{\mathrm{tail}\left({#1}\right)}}
\newcommand{\ann}[1]{\ensuremath{\mathrm{Ann}\left({#1}\right)}}
\newcommand{\spoly}[2]{\ensuremath{\mathrm{spoly}\left({#1},{#2}\right)}}
\newcommand{\gpoly}[2]{\ensuremath{\mathrm{gpoly}\left({#1},{#2}\right)}}
\newcommand{\apoly}[1]{\ensuremath{\mathrm{apoly}\left({#1}\right)}}
\newcommand{\nf}[2]{\ensuremath{\text{\sffamily{NF}}\left({#1},{#2}\right)}}
\newcommand{\R}{\ensuremath{\mathcal{R}}\xspace}
\newcommand{\PR}{\ensuremath{\mathcal{R}[x]}\xspace}
\newcommand{\const}{a}
\newcommand{\spt}{S-poly\-no\-mial\xspace}
\newcommand{\spts}{S-poly\-no\-mials\xspace}
\newcommand{\gpt}{GCD-poly\-no\-mial\xspace}
\newcommand{\gpts}{GCD-poly\-no\-mials\xspace}
\newcommand{\apts}{anni\-hi\-la\-tor poly\-no\-mials\xspace}
\newcommand{\stbs}{standard bases\xspace}
\newcommand{\sgb}{strong Gr\"ob\-ner ba\-sis\xspace}
\newcommand{\sgbs}{strong Gr\"ob\-ner ba\-ses\xspace}
\newcommand{\gb}{Gr\"ob\-ner ba\-sis\xspace}
\newcommand{\gbs}{Gr\"ob\-ner ba\-ses\xspace}
\newcommand{\sbba}{{\sffamily sBBA}\xspace}
\newcommand{\F}{\ensuremath{\mathbb{F}}}
\newcommand{\Q}{\ensuremath{\mathbb{Q}}}
\newcommand{\Z}{\ensuremath{\mathbb{Z}}}
\newcommand{\Zn}{\ensuremath{\Z_{\n}}}
\newcommand{\N}{\ensuremath{\mathbb{N}}}
\definecolor{mygreend}{HTML}{2ca92c}
\definecolor{myredd}{HTML}{c31313}
\definecolor{codegreen}{rgb}{0,0.6,0}
\definecolor{codegray}{rgb}{0.5,0.5,0.5}
\definecolor{codepurple}{rgb}{0.58,0,0.82}
\definecolor{backcolour}{rgb}{0.99,0.99,0.99}
\renewcommand*{\arraystretch}{1.5}
\setlist[enumerate]{topsep=1ex, itemsep=1ex}
\newenvironment{breakablealgorithm}
  {% \begin{breakablealgorithm}
   \begin{center}
     \refstepcounter{algorithm}% New algorithm
     \hrule height.8pt depth0pt \kern2pt% \@fs@pre for \@fs@ruled
     \renewcommand{\caption}[2][\relax]{% Make a new \caption
       {\raggedright\textbf{\ALG@name~\thealgorithm} ##2\par}%
       \ifx\relax##1\relax % #1 is \relax
         \addcontentsline{loa}{algorithm}{\protect\numberline{\thealgorithm}##2}%
       \else % #1 is not \relax
         \addcontentsline{loa}{algorithm}{\protect\numberline{\thealgorithm}##1}%
       \fi
       \kern2pt\hrule\kern2pt
     }
  }{% \end{breakablealgorithm}
     \kern2pt\hrule\relax% \@fs@post for \@fs@ruled
   \end{center}
  }
\begin{document}
\title{Efficient Gr\"obner Bases Computation over Principal Ideal Rings }
% \titlenote{Produces the permission block, and
%   copyright information}
%

\author{Christian Eder}
\address{University of Leipzig\\Department of Mathematics\\D-04109 Leipzig}
\ead{eder@math.uni-leipzig.de}
% \authornote{Dr.~Trovato insisted his name be first.}
% \orcid{1234-5678-9012}
% \affiliation{%
%   \institution{Technische Universit\"at Kaiserslautern\\Deparment of Mathematics}
%   % \streetaddress{P}
%   \city{67663 Kaiserslautern, Germany}
%   % \state{Ohio}
%   % \postcode{43017-6221}
% }
% \email{ederc@mathematik.uni-kl.de}

\author{Tommy Hofmann}
\address{Technische Universtität Kaiserslautern\\Department of Mathematics\\D-67663 Kaiserslautern}
\ead{thofmann@mathematik.uni-kl.de}
% \authornote{Dr.~Trovato insisted his name be first.}
% \orcid{1234-5678-9012}
% \affiliation{%
%   \institution{Technische Universit\"at Kaiserslautern\\Deparment of Mathematics}
%   % \streetaddress{P}
%   \city{67663 Kaiserslautern, Germany}
%   % \state{Ohio}
%   % \postcode{43017-6221}
% }
% \email{thofmann@mathematik.uni-kl.de}

\begin{abstract}
In this paper we present a new efficient variant to compute \sgbs over quotients of principal
ideal domains. We show an easy lifting process which allows us to reduce one
computation over the quotient $\R/\n\R$ to two computations over $\R/a\R$ and
$\R/b\R$ where $\n = ab$ with coprime $a, b$. Possibly using available factorization algorithms we
may thus recursively reduce some \sgb computations to \gb computations over fields
for prime factors of $\n$, at least for squarefree $n$. Considering now a
computation over $\R/n\R$ we can run
a standard \gb algorithm pretending $\R/n\R$ to be field. If we discover a non-invertible leading
coefficient $c$, we use this information to try to split $n = ab$ with coprime $a, b$. If no such $c$ is discovered, the
returned \gb is already a \sgb for the input ideal over $\R/n\R$.
\end{abstract}

%
% The code below should be generated by the tool at
% http://dl.acm.org/ccs.cfm
% Please copy and paste the code instead of the example below.
%
% \begin{CCSXML}
% <ccs2012>
%     <concept>
%     <concept_id>10002950</concept_id>
%     <concept_desc>Mathematics of computing</concept_desc>
%     <concept_significance>500</concept_significance>
%     </concept>
% </ccs2012>
% \end{CCSXML}
%
% \ccsdesc[500]{Mathematics of computing}

\begin{keyword}
Gr\"obner bases, Principal ideal rings
\end{keyword}

\maketitle

\section{Introduction}
\label{sec-intro}
In $1964$ Hironaka already investigated
computational approaches towards singularities and introduced the notion of
\stbs for local monomial orders, see, for
example,~\cite{hironaka11964, hironaka21964, grauert1972}.
In~\cite{bGroebner1965, bGroebner1965eng}, Buchberger initiated, in $1965$, the theory
of \gbs for global monomial orders by which many fundamental problems in mathematics, science
and engineering can be solved algorithmically. Specifically, he introduced some key
structural theory, and based on
this theory, proposed the first algorithm for computing \gbs.
Buchberger's algorithm introduced the concept of critical pairs and repeatedly carries out a certain
polynomial operation (called reduction).

%Many of those reductions would be
%determined as ``useless'' (i.e. no contribution to the output of the algorithm),
%but only a posteriori, that is, after an (often expensive) reduction process.
%Thus intensive research was carried out, starting with Buchberger, to avoid the
%useless reductions via a priori criteria, see,  for
%example,~\cite{bGroebnerCriterion1979,buchberger2ndCriterion1985,gmInstallation1988}.

Once the underlying structure is no longer a field, one needs
the notion of strong \gbs respectively strong \stbs. Influential work was done
by~\cite{kapur1988}, introducing the first generalization of Buchberger's
algorithm over Euclidean domains computing strong \gbs. Since then only a few
optimizations have been introduced, see, for example,~\cite{Wienand2011,
lichtblau2012, eppSigZ2017}. For more general rings, like principal ideal
domains or rings, more recent approaches can be found, for example,
in~\cite{norton_2001, pauer-2007, Popescu2016, francis-verron-2019}. Common to all these
approaches is the idea to transfer ideas from the well studied field case, like
criteria for predicting zero reductions or the use of linear algebra, to the
setting of rings.

In some sense, we take this approach to the extreme by just treating the underlying
ring as field and by hopefully splitting the computation to smaller problems in case it fails.
To be more precise, consider a quotient $\R/n\R$ of a principal ideal domain $\R$ for some
non-trivial element $n \in \R$.
If $I \subseteq (\R/n\R)[x]$ is an ideal for which we want to find a strong Gröbner basis computation,
we pretend that $n$ is prime, that is, $\R/n\R$ is a field and apply a classical Gröbner basis algorithm
from the field case to $I$. If this does not encounter a non-invertible element, then we are done.
Otherwise we use a non-invertible element to split $n = a b$ with coprime elements $a, b \in R$.
After computing strong Gröbner bases of $I$ over $\R/a\R$ and $\R/b\R$, we pull them back along the canonical isomorphism
$\R/n\R \to \R/a\R \times \R/b\R$ to obtain a strong Gröbner basis of $I$.
In case we cannot split $n$, we fall back to a classical algorithm for computing strong Gröbner basis.
The most favorable case for the new algorithm are squarefree elements $n$, since then any non-invertible element allows us to split $n$.

The idea of working in $\R/n\R$ as if $n$ were a prime, is a common strategy in computer algebra.
Other examples include the computation of matrix normal forms over $\R/n\R$, see \cite{Fieker2014}.
To make this approach work in the setting of strong Gröbner bases, we investigate the behavior of strong Gröbner bases with respect to
quotients and the Chinese remainder theorem.
By properly normalizing the strong Gröbner basis, we prove that one can efficiently pull back strong Gröbner bases along a projection $\R \to \R/n\R$ (Theorem~\ref{thm:pir1}) as well as along a canonical isomorphism $\R/n\R \to \R/a\R \times \R/b\R$ (Theorem~\ref{thm:pir2}).

The algorithm has been implemented to compute strong Gröbner bases over residue class rings of the form $\Z/n\Z$, where $n \in \Z_{>0}$, see Section~\ref{sec-results}.
Running standard benchmarks for Gröbner basis computations for $n$ of different shape shows a consistent speed-up across all examples (except one). In case of squarefree $n$, the new algorithm improves upon the state of the art implementations by a factor of 10--100.

\section*{Acknowledgments}
This work was supported by DFG project SFB-TRR 195.
The authors thank Claus Fieker for helpful comments.

\section{Basic notions}
\label{sec:notation}
% We use notation corresponding to~\cite{eder-faugere-2016}:
Let \R be a principal ideal ring, that is a unital commutative ring such that every ideal is principal. Note that
\R is not necessarily an integral domain.
If $a, b \in \R$ are two elements with $a\R \subseteq b\R$ we denote by $a/b$ by
abuse of notation any element $c \in \R$ with $c \cdot b = a$.
Recall that a \textit{least common multiple} of two elements $a, b$ is an element $l \in
\R$ such that $l\R = a\R \cap b\R$. By abuse of notation we denote by $\lcm(a, b)$
such an element. Similarly, we denote by $\gcd(a, b)$ an element of $\R$ with
$r\R + s\R = \gcd(a, b)\R$ and call it a \textit{greatest common divisor}.
For an element $n \in \R$ we denote by $\pi_n \colon \R \to \R/n\R$ the canonical projection.
For an ideal $I \subset \R$ we define the \emph{annihilator of $I$} by
\[\ann I = \left\{c \in \R \mid c \cdot a = 0\;\forall a \in I \right\}.\]
For an element $a\in\R$ we denote by $\ann a := \ann{\langle a \rangle}$ the \textit{annihilator of $a$}.

%Throughout this paper we fix a total order $\prec$ on $\R$.

% Besides Section~\ref{sec:finite-rings} we assume that \R has no
% zero divisors.
A \emph{polynomial}
in $n$ variables $x_1,\ldots,x_n$ over \R is a finite \R-linear
combination of \emph{terms} $\const_{v_1,\ldots,v_n} \prod_{i=1}^n
x_i^{v_i}$,
\[f=\sum_{v}\const_v x^v \defeq
\sum_{v\in\N^n}^{\text{finite}}\const_{v_1,\ldots,v_n} \prod_{i=1}^n
x_i^{v_i},\]
such that $v \in \N^n$ and $\const_v \in \R$.
The \emph{polynomial ring} $\R[x] \defeq \R[x_1,\ldots,x_n]$
in $n$ variables over $\R$ is the set of all polynomials over $\R$
together with the usual addition and multiplication. For $f=\sum_{v}\const_v
x^v \neq 0 \in\PR$ we define \emph{the degree of $f$} by $\deg(f) :=
\max\left\{v_1+\cdots +v_n \mid v \in \N^n, \const_v \neq 0\right\}$. For $f=0$ we set
$\deg(f):=
-1$.
We fix once and for all a monomial order $<$ on $\PR$, which, for the sake
of simplicity, is assumed to be global, that is, $x^\alpha \geq 1$ for all
$\alpha \in \N^n$.
Given a monomial order $<$ we can highlight the maximal terms of
elements in \PR with respect to $<$: For $f\in \PR\ \backslash\ \{0\}$,
$\hd f$ is the \emph{lead term}, $\hm f$ the \emph{lead monomial}, and $\lc f$
the \emph{lead coefficient} of $f$. For any set $F \subset \PR$ we define
the \emph{lead ideal} $L(F) = \langle \hd f \mid f \in F\rangle$; for an ideal
$I \subset \PR$,\  $L(I)$ is defined as the ideal of lead terms of all elements of
$I$.
% Moreover, we define the \emph{ecart} of $f$ by $\ecart f := \deg(f) -
% \deg\left(\hm f\right)$.
% For $\alpha \in \module\ \backslash\ \{0\}$ we
% denote the maximal term w.r.t. $<$ by $\sig\alpha$, the \emph{signature of
% $\alpha$}. We also define signatures of polynomials: For $f\in \PR$, $\sig f$
% is \emph{a} signature of $f$, given by the signature of the corresponding module
% element $\alpha\in\module$ mapping to $f$: $\sig f = \sig{\proj\alpha} =
% \sig\alpha$. Note that whereas $\sig\alpha$ is uniquely defined, $\sig f$ is not
% since there exist different module elements with different signatures that map
% to $f$.

% Assuming that our coefficient ring \R is an Euclidean domain we can define a total
% order $\prec$ using the Euclidean norm $|\cdot|$ of its elements: Let $\const_1, \const_2
% \in \R$, then $\const_1 \prec \const_2 \text{ if } |\const_1 | < | \const_2 |.$
% %\mid \text{ (ties broken corespondingly)}.\]
% For example, for the integers we can use the absolute value and break ties via
% sign:
% \[0\prec -1 \prec 1 \prec -2 \prec 2 \prec -3 \prec 3 \prec \ldots \]
%

The reduction process of two polynomials $f$ and $g$ in \PR depends now on the
uniqueness of the minimal remainder in the division algorithm in \R:

\begin{definition}
\label{def:reduction}
Let $f, g \in \PR$ and let $G= \{g_1,\ldots,g_r\} \subset \PR$ be a finite set of
polynomials.
\begin{enumerate}
\item We say that \emph{$g$ top-reduces $f$} if $\hm g \mid \hm f$ and $\lc g \mid \lc f$.
%if there exist
%$a,b \in \R$ such that $\lc f = a\, \lc g +b$ such
%that $a \neq 0$, which coincides with $b \prec \lc f$.
A top-reduction of $f$
by $g$ is then given by
    \[f - \frac{\lc f}{\lc g} \frac{\hm f}{\hm g} g.\]
%So a top-reduction takes place if the reduced polynomial will have either a smaller lead
%mononmial or a smaller lead coefficient.
\item Relaxing the reduction of the lead term to any term of $f$, we say
that \emph{$g$ reduces $f$}. In general, we speak of a reduction of a
polynomial $f$ with respect to a finite set $F\subset \PR$.
Let 
\item We say that $f$ has a \emph{weak standard representation} with respect to $G$ if
$f = \sum_{i=1}^r h_i g_i$ for some $h_i \in \PR$ such that $\hm f = \hm{h_j
  g_j}$ for some $j \in \{1,\ldots,r\}$.
\item We say that $f$ has a \emph{strong standard representation} with respect to $G$ if
$f = \sum_{i=1}^r h_i g_i$ for some $h_i \in \PR$ such that $\hm f = \hm{h_j
  g_j}$ for some $j \in \{1,\ldots,r\}$ and $\hm f > \hm{h_k g_k}$ for all $k
  \neq j$.
\end{enumerate}
\end{definition}

This kind of reduction is equivalent to definition CP3
from~\cite{kapur-cp3} and generalizes
Buchberger's attempt from~\cite{Buchberger1985}.
The result of such a reduction might not be unique. This uniqueness is exactly
the property \emph{\gbs} give us.

\begin{definition}
\label{def:strong-gb}
A finite set $G \subset \PR$ is called a \emph{\gb} for an ideal $I$ (with respect to
$<$) if $G \subset I$ and $L(G) = L(I)$. Furthermore, $G$ is called a
\emph{\sgb}
% \footnote{Note that in the area of \sigb algorithms the notion
% of a \emph{strong \gb} is sometimes also used in a different manner, see, for example,
%    \cite{gvwGVW2015,eder-faugere-2016}.}
   if for any $f \in I\backslash\{0\}$ there exists an element $g\in G$
such that $\hd g \mid \hd f$. 
\end{definition}

\begin{remark}
Note that $G$ being a \sgb is equivalent to all elements $g \in G$ having
a strong standard representation with respect to $G$. See, for
example, Theorem~1 in~\cite{lichtblau2012} for a proof.
\end{remark}

Clearly, assuming that $\R$ is a field, any \gb is a \sgb. But in our setting
with \R being a principal ideal ring one has to check the coefficients, too, as
explained in Definition~\ref{def:reduction}.
The fact that for an arbitrary principal ideal ring the notions of Gröbner bases and strong Gröbner 
bases do not agree, can be observed already for monomial ideals in univariate polynomial rings:

\begin{example}
\label{ex:strong-gb}
Let $\R = \Z$ and $I= \langle x \rangle \in \R[x]$. Clearly, $G :=
\{2x,3x\}$ is a \gb for $I$: $L(I) = \langle x \rangle$ and $x = 3x-2x \in
L(G)$. But $G$ is not a \sgb for $I$ since $2x \nmid
x$ and $3x \nmid x$.
\end{example}

In order to compute \sgbs we need to consider two different types of
special polynomials:

\begin{definition}
\label{def:spoly}
Let $f,g \in \PR$, $t = \lcm\left(\hm f, \hm g\right)$, $t_f = \frac{t}{\hm f}$, and $t_g = \frac{t}{\hm
  g}$.
\begin{enumerate}
\item Let $a = \lcm\left(\lc f, \lc g\right)$, $a_f = \frac{a}{\lc g},$ and $a_g =
\frac{a}{\lc f}$. A \emph{\spt} of $f$ and $g$ is denoted by
\[\spoly f g = a_f t_f f - a_g t_g g.\]
\item Let $b = \gcd\left(\lc f, \lc g\right)$. Choose $b_f, b_g \in \R$ such that $b = b_f \lc f + b_g \lc
g$.
%\footnote{Our rings $\R$ need to have an extended gcd.}
A \emph{\gpt} of $f$ and $g$ is denoted by
\[\gpoly f g = b_f t_f f + b_g t_g g.\]
\item Let $a \in \R$ be a generator of $\ann{\lc f}$. An \emph{annihilator polynomial} of $f$ is denoted by
\[\apoly f = a f = a \, \tail f.\]
\end{enumerate}
\end{definition}

\begin{remark} \
\label{rem:gpairs}
\begin{enumerate}
\item Note that $\spoly f g, \gpoly f g$ as well as $\apoly f g$ are not uniquely defined, since quotients, Bézout coefficients and generators are in general not unique.
\item If $\lc f$ is not a zero divisor in $\R$ then $\apoly f = 0$. It follows
that if $\R$ is a domain, there is no need to handle \apts since $0$ is the only
zero divisor.
\item In the field case we do not need to consider \gpts at all since we can
always normalize the polynomials, that is, ensure that $\lc f = 1$.
%Working over $\R = \Z$
%We know that we can write $\langle \lc f, \lc g \rangle$ as a principal ideal, say
%$\langle c \rangle = \langle \lc f, \lc g \rangle$ for some $c\in\R$. Then
%there exist $c_f \neq c_f'$, $c_g \neq c_g' \in \R$ such that
%\[c_f \lc f + c_g \lc g = c = c_f' \lc f + c_g' \lc g.\]
%Depending on the implementation of the $\gcd$ algorithm one specific choice is
%made for each \gpt.
% \item Furthermore, if $\lc f \mid \lc g$ then $b_f
% = 1 $ and $b_g = 0$, thus $\gpoly f g$ is just a power product multiple of $f$.
% Thus, in a \stb computation it reduces to zero and is redundant.
\end{enumerate}
\end{remark}

From Example~\ref{ex:strong-gb} it is clear that the usual Buchberger
algorithm as in the field case will not compute a \sgb as we would only
consider $\spoly{2x}{3x} = 3 \cdot 2x - 2 \cdot 3x = 0$. Luckily, we
can fix this via taking care of the corresponding \gpt:
\[\gpoly{2x}{3x} = (-1) \cdot 2x - (-1) \cdot 3x = x.\]
It follows that given an ideal $I \subset \PR$ a \sgb for $I$ can be achieved using
a generalized version of Buchberger's algorithm computing not only strong
standard representations of \spts
but also of \gpts and \apts. We refer,
for example,  to~\cite{lichtblau2012} for more details.

So, how do we get a strong standard representations of elements w.r.t. some set $G$?
The answer is given by the concept of a \emph{normal
form}:

\begin{definition}
Let $\mathcal G$ denote the set of all finite subsets $G \subset \PR$. We call
the map
  % \[
$\text{\sffamily{NF}}: \PR \times \mathcal G \longrightarrow \PR, \, (f, G)
\longmapsto \nf f G$,
  % \]
  a \emph{weak normal form} (w.r.t. a monomial ordering  $<$) if for all $f\in \PR$ and all $G \in
\mathcal G$ the following hold:
\begin{enumerate}
\item $\nf 0 G = 0$.
\item If $\nf f G \neq 0$ then $\hd{\nf f G} \notin L(G)$.
\item If $f \neq 0$ then there exists a unit $u\in \PR$ such that either
$uf = \nf f G$ or
$r = uf - \nf f G$ has a strong standard representation with respect to $G$.
\end{enumerate}
A weak normal form {\sffamily NF} is called a \emph{normal form} if we can always
choose $u=1$.
\end{definition}

Algorithm~\ref{alg:nfg} presents a normal form
algorithm for computations:

\begin{algorithm}[ht]
\caption{Normal form with respect to a global monomial order $<$
  (\nfn)} 
\label{alg:nfg}
\begin{algorithmic}[1]
\Require{Polynomial $f \in \PR$, finite subset $G\subset \PR$}
\Ensure{\nfn of $f$ w.r.t. $G$ and $<$}
\State{$h \gets f$}
\While{$\left(h \neq 0 \text{ and }G_h := \{g \in G \mid g \text{ top-reduces } h\}
  \neq\emptyset\right)$}
\State{Choose $g \in G_h$.}
\State{$h\gets $ Top-reduction of $h$ by $g$ (see
    Definition~\ref{def:reduction})}
\EndWhile
\State{$\text{\textbf{return }}h$}
\end{algorithmic}
\end{algorithm}

Now we state Buchberger's algorithm for computing \sgbs,
Algorithm~\ref{alg:bba}. For the theoretical background we refer to~\cite{gpSingularBook2007}
and~\cite{Becker1993}.

\begin{breakablealgorithm}
\caption{Buchberger's algorithm for computing \sgbs
  (\sbba)} 
\label{alg:bba}
\begin{algorithmic}[1]
\Require{Ideal $I=\langle f_1,\ldots,f_m\rangle \subset \PR$, normal form algorithm \nfn (depending on $<$)}
\Ensure{\gb $G$ for $I$ w.r.t. $<$}
\State{$G \gets \{f_1,\ldots,f_m\}$}
\State{$P \gets \left\{\apoly{f_i}  \mid 1 \leq i \leq n \right\}$}
\State{$P \gets \left\{\spoly{f_i}{f_j}, \gpoly{f_i}{f_j} \mid 1 \leq i < j \leq
  m\right\}$}\label{alg:bba:update1}
\While{$\left(P \neq \emptyset\right)$}
\State{Choose $h \in P$, $P \gets P \setminus \{h\}$}\label{alg:bba:choose}
\State {$h \gets \nf h G$}
\If{$\left(h \neq 0\right)$}
\State{$P \gets P \cup \left\{\apoly{h}\right\}$}
\State{$P \gets P \cup \left\{\spoly{g}{h}, \gpoly{g}{h} \mid g \in
  G\right\}$}\label{alg:bba:update2}
\State{$G \gets G \cup \{h\}$}\label{alg:bba:new-poly}
\EndIf
\EndWhile
\State{$\text{\textbf{return }}G$}
\end{algorithmic}
\end{breakablealgorithm}

\section{Strong \gbs over principal ideal rings}
\label{sec-pip}
In this section we give theoretical results for the computation of \sgbs over
principal ideal rings. These results will then be used in
Section~\ref{sec:algorithmic} for an improved computation of \sgbs over
quotients of principal ideal rings.
We begin by analyzing Algorithm~\ref{alg:bba} in case all occurring leading coefficients are invertible.
%One main fact for improving computation over $\R$ are given by the following
%lemma:

\begin{lemma}
\label{lem:invertible}
Let $I=\langle f_1,\ldots, f_m\rangle \subset \PR$ be an ideal such that for all
$i=1,\ldots, m$ we have that  $\lc{f_i}$ is invertible in $\R$. Moreover, assume that for each
newly added polynomial $h$ in Line~\ref{alg:bba:new-poly} in Algorithm~\ref{alg:bba} the polynomial $\lc h$ is
invertible in $\R$. Then Algorithm~\ref{alg:bba} does not need to consider \gpts and
\apts.
\end{lemma}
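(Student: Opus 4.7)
The plan is to handle the two types of special polynomials separately. For the annihilator polynomials, by Remark~\ref{rem:gpairs}(2), if $\lc h$ is invertible in $\R$ then $\ann{\lc h} = 0$, so the generator $a$ of $\ann{\lc h}$ in Definition~\ref{def:spoly}(3) is zero and $\apoly h = 0$. Under our hypothesis every polynomial $h$ ever added to $G$ satisfies this, so every annihilator polynomial that Algorithm~\ref{alg:bba} would enqueue is the zero polynomial and contributes nothing to subsequent reductions.

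For the \gpts the approach is to show that the variant of Algorithm~\ref{alg:bba} that never enqueues a \gpt still returns a \sgb. Under the hypothesis, every $g \in G$ at termination has $\lc g$ invertible, hence
\[ L(G) = \langle \hd g \mid g \in G\rangle = \langle \lc g \hm g \mid g \in G\rangle = \langle \hm g \mid g \in G\rangle \]
is a monomial ideal. Assuming for the moment that we already know $L(G) = L(I)$, the \sgb property follows quickly: for any nonzero $f \in I$ we have $\hd f \in L(G)$, and since $L(G)$ is monomial some $\hm g$ divides $\hm f$; since $\lc g$ is a unit it automatically divides $\lc f$, so $\hd g \mid \hd f$, which is exactly the \sgb condition from Definition~\ref{def:strong-gb}.

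The remaining step is the identity $L(G) = L(I)$. I would obtain this from the classical Buchberger criterion: whenever every \spt of two elements of $G$ has a strong standard representation with respect to $G$, the set $G$ is a \gb for $\langle G\rangle$. The standard proof over a field adapts to our setting almost verbatim, because the invertibility of every $\lc{g_i}$ makes each top-reduction invoked in the argument well-defined exactly as in the field case (the divisibility condition on leading coefficients in Definition~\ref{def:reduction}(1) is automatic). This verification of the classical criterion inside our ring-theoretic framework is the only delicate point; everything else is bookkeeping, namely observing that skipping the enqueue lines for \apts and \gpts in Algorithm~\ref{alg:bba} leaves the \spt processing untouched.
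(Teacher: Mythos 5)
Your treatment of the annihilator polynomials matches the paper exactly: invertible leading coefficient implies zero annihilator ideal, hence $\apoly h = 0$ by definition, which is precisely the paper's first bullet.

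For the \gpts, however, you take a genuinely different and substantially longer route, and it contains a gap. The paper's argument is a one-liner: if $\lc f$ is a unit then $\lc f \mid \lc g$, so in Definition~\ref{def:spoly}(2) one may take $b = \lc f$, $b_f = 1$, $b_g = 0$, which makes $\gpoly f g$ a monomial multiple of $f$ and hence trivially redundant (the paper writes ``$\gpoly f g = 0$'', informally meaning it contributes nothing). That is the whole content of the lemma: under the invertibility hypothesis every \gpt and \apt the algorithm would enqueue is vacuous, so skipping those lines changes nothing. Your proposal instead tries to re-establish correctness of the \spt-only Buchberger algorithm from scratch, via $L(G) = L(I)$ and the Buchberger criterion. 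The second half of your argument (a \gb with all-unit leading coefficients is automatically a \sgb) is fine, but the crucial first half --- that the classical Buchberger criterion adapts ``almost verbatim'' to $\PR$ when all leading coefficients are invertible --- is asserted, not proved. That assertion is exactly as hard as the lemma you are trying to prove, so the argument is circular in effect. You should also notice that your approach does not actually show the \gpts are harmless to enqueue (which is what ``does not need to consider'' is naturally taken to mean); instead it tries to prove correctness of a modified algorithm, a stronger and unnecessary detour. The direct observation the paper uses --- a unit divides everything, so $\gcd(\lc f, \lc g)$ is attained by $\lc f$ alone and the \gpt degenerates --- is both shorter and closes the gap.
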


\begin{proof}
We show that all \gpts and all \apts are zero in the setting of the lemma:
\begin{enumerate}
\item For each element $g$ in the intermediate \gb $G$ it holds that $\lc g$ is
invertible in $\R$ and thus, not a zero divisor. It follows that $\apoly g = 0$
by definition.
\item For each $\gpoly f g$ for $f,g \in G$ it holds that $\lc f \mid \lc g$:
$\lc f$ is invertible in $\R$, so we get
\[\left(\lc g \cdot \left(\lc f\right)^{-1}\right) \lc f = \lc g.\]
Again, by definition, $\gpoly f g = 0$.\qedhere
\end{enumerate}
\end{proof}

\begin{remark}
From Lemma~\ref{lem:invertible} it follows that as long as
Algorithm~\ref{alg:bba} does not encounter a lead coefficient that is not
invertible in $\R$ we can use Buchberger's algorithm from the field case
without the need to consider \gpts and \apts for strongness properties. In
Section~\ref{sec:algorithmic} we discuss
how one can use this fact to improve the general computations of \sgbs over $\PR$.
\end{remark}

We next next show how to pull back a strong Gröbner bases along a canonical projection $\PR \to (\R/n\R)[x]$.

\begin{theorem}\label{thm:pir1}
  Let $n \in \R$, $n \neq 0$ and $I \subseteq \R[x]$ an ideal. Assume that $G_n
  \subseteq  \R[x]$ is a set of polynomials with the following properties:
  \begin{enumerate}
    \item $\pi_n(G_n)$ is a strong Gröbner basis of $\pi_n(I)$;
    \item for every $g \in G_n$ the leading coefficient $\lc g$ divides $n$ and
    $\lc g \not\in n\R$.
  \end{enumerate}
  Then $G_n \cup \{n\}$ is a strong Gröbner basis of $I + n\R[x]$.
\end{theorem}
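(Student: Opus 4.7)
The goal is to verify the two defining properties of a strong Gröbner basis: that $G_n \cup \{n\} \subseteq I + n\R[x]$, and that for every nonzero $f \in I + n\R[x]$ some element of $G_n \cup \{n\}$ has a leading term dividing $\hd f$ in $\R[x]$. The inclusion $n \in I + n\R[x]$ is immediate, and for each $g \in G_n$ we have $\pi_n(g) \in \pi_n(I)$, which lifts to $g \in I + n\R[x]$. So only the divisibility condition requires work.

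Given a nonzero $f \in I + n\R[x]$, the plan is to split into two cases according to whether $n$ divides $\lc f$. If $n \mid \lc f$, then since $\hd n = n$ and $\hm n = 1 \mid \hm f$, the element $n$ itself has a leading term dividing $\hd f$, and we are done. Otherwise $\pi_n(\lc f) \neq 0$, which means that the projection $\pi_n(f) \in (\R/n\R)[x]$ has the same leading monomial $\hm f$ as $f$ and leading coefficient $\pi_n(\lc f)$. Since $\pi_n(f)$ is a nonzero element of $\pi_n(I)$, the strong Gröbner basis hypothesis on $\pi_n(G_n)$ produces some $g \in G_n$ with $\hd{\pi_n(g)} \mid \hd{\pi_n(f)}$ in $(\R/n\R)[x]$. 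The assumption $\lc g \notin n\R$ ensures that $\pi_n(g)$ has leading monomial $\hm g$ and leading coefficient $\pi_n(\lc g)$, so this divisibility splits into $\hm g \mid \hm f$ (as monomials) and $\pi_n(\lc g) \mid \pi_n(\lc f)$ in $\R/n\R$.

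The main obstacle and the place where the second hypothesis on $G_n$ is crucial is lifting the coefficient divisibility from $\R/n\R$ to $\R$. From $\pi_n(\lc g) \mid \pi_n(\lc f)$ there exist $c, d \in \R$ with $\lc f = \lc g \cdot c + n \cdot d$. The condition $\lc g \mid n$ lets us write $n = \lc g \cdot m$ for some $m \in \R$, whence
\[
\lc f = \lc g \cdot c + \lc g \cdot m \cdot d = \lc g \cdot (c + m d),
\]
so $\lc g \mid \lc f$ in $\R$. Combined with $\hm g \mid \hm f$, this yields $\hd g \mid \hd f$, completing the verification.
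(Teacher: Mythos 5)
Your proof is correct and follows essentially the same route as the paper's: split on whether $n$ divides $\lc f$, use the strong Gröbner basis hypothesis modulo $n$ in the other case, and lift the coefficient divisibility from $\R/n\R$ to $\R$ via the hypothesis $\lc g \mid n$. Your version is slightly more careful in working explicitly with $f \in I + n\R[x]$ (the paper writes ``let $f \in I$'', a minor slip) and in spelling out the algebra $\lc f = \lc g \cdot c + n\cdot d = \lc g (c + md)$, but the underlying argument is the same.
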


\begin{proof}
  It is clear that $G_n \cup \{ n \} \subseteq I + n\R[x]$.
  Now let $f \in I$. If $\pi_n(\lc f) = 0$, then the leading term is a multiple of $n$.
  Thus we may assume that $\pi_n(f) \neq 0$. Since $\pi_n(G_n)$ is a strong Gröbner basis of $\pi_n(I)$, there exists $g \in G_n$ such that $\hd{\pi_n(g)}$ divides $\hd{\pi_n(f)}$.
  Hence we can find $h \in \R[x]$ with $\pi_n(h) \cdot \hd{\pi_n(g)} = \hd{\pi_n(f)}$.
  We can assume that $h$ is a term and $\hm h  \cdot \hm g  = \hm f $.
  By assumption we have $\pi_n(\hd g ) = \hd{\pi_n(g)}$ as well as $\pi_n(\hd f ) = \hd{\pi_n(f)}$.
  Hence we can write $h \cdot \hd g - \hd f = c \cdot x^{\deg(f)}$ for some $c
  \in n\R$.
  Since $\lc g$ divides $n$ and $\hm g$ divides $\hm f$ it follows that $\hd g$ divides $\hd f$.
\end{proof}

\begin{remark}
Assume that we know that an ideal $I \subseteq \R[x]$ contains a constant polynomial $n \in \R$, $n \neq 0$.
As $I = I + n\PR$, Theorem~\ref{thm:pir1} implies that we can compute a strong Gröbner basis
of $I$ be properly choosing the lifts of a strong Gröbner basis of the reduction
$\pi_n(I) \subseteq (\R/n\R)[x]$.
%can be useful when we know that $I$ includes some
%coefficient $n$.
For $\R = \Z$, a similar idea can be found in Section~$4$
of~\cite{epp-sb-euclidean-domains-2018}. There it is described how to check if
an ideal $I \subseteq \Z[x]$ contains a constant polynomial $n \in \Z$, $n \neq
0$. In case it exists, the authors describe an ad hoc method which keeps the size
of the coefficients of the polynomials in Algorithm~\ref{alg:bba} bounded by
$n$.
\end{remark}

We now consider the following situation. Assume that $a, b, u, v \in \R$ are elements with $1 = ua + vb$ and $a b = 0$.
Note that this implies $(ua)^2 = ua$ and $(vb)^2 = vb$.

\begin{theorem}\label{thm:pir2}
  Assume that $I \subseteq \R[x]$ is an ideal. Furthermore let $\kidx, \lidx$ be finite index
  sets and $G_a = (g_{a, k})_{k \in \kidx},
  G_b = (g_{b, l})_{l \in \lidx}$ strong Gröbner bases of $I + a\R[x]$ and $I +
  b\R[x]$ respectively, satisfying the following conditions:
  \begin{enumerate}
    \item
      For $k \in \kidx$, if $g_{a, k}$ is non-constant, then $\lc{g_{a, k}}$ divides
      $a$ and $\lc{g_{a, k}} \not\in a\R$
    \item
      For $l \in \lidx$, if $g_{b, l}$ is non-constant, then $\lc{g_{b, l}}$ divides
      $b$ and $\lc{g_{b, l}} \not\in b\R$.
  \end{enumerate}
  For $k \in \kidx$, $l \in \lidx$ define
  \begin{align*} f_{k, l} = &ua \frac{\lcm\left(\hm{g_{a, k}},\hm{g_{b,
          l}}\right)}{\hm{g_{b, l}}} \cdot \lc{g_{a, k}} g_{b, l} \\ 
  + &vb \frac{\lcm\left(\hm{g_{a, k}},\hm{g_{b, l}}\right)}{\hm{g_{a, k}}} \cdot \lc{g_{b, l}} g_{a, k}. \end{align*}
  Then $G = \{ f_{k, l} \mid k \in \kidx, l \in \lidx\}$
  is a strong Gröbner basis of $I$.
\end{theorem}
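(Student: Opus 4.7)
The plan is to verify the two defining conditions of a strong Gröbner basis in turn: first that every $f_{k,l}$ lies in $I$, and second that every nonzero element of $I$ has its leading term divisible by the leading term of some $f_{k,l}$. Throughout, the decisive structural fact is the Chinese remainder isomorphism $\R \cong \R/a\R \times \R/b\R$ coming from $1 = ua + vb$ and $a b = 0$; in particular $a\R \cap b\R = ab\R = 0$.

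For membership, I would write each $g_{a,k} = p + a q$ with $p \in I$ and $q \in \R[x]$ (since $g_{a,k} \in I + a\R[x]$). Multiplying by $vb$ and using $vba = 0$ gives $vb \cdot g_{a,k} = vb \cdot p \in I$; the analogous manipulation yields $ua \cdot g_{b,l} \in I$. Both summands of $f_{k,l}$ are then $\R[x]$-multiples of these elements, so $f_{k,l} \in I$. Next I would compute $\hd{f_{k,l}}$: both summands share the leading monomial $m := \lcm(\hm{g_{a,k}},\hm{g_{b,l}})$, and the coefficients of $m$ sum to $(ua + vb)\lc{g_{a,k}}\lc{g_{b,l}} = \lc{g_{a,k}}\lc{g_{b,l}}$. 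Thus, provided this product is nonzero,
\[\hd{f_{k,l}} = \lc{g_{a,k}}\lc{g_{b,l}} \cdot \lcm(\hm{g_{a,k}},\hm{g_{b,l}}).\]

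For the strong Gröbner basis property, take $f \in I \setminus \{0\}$. Since $G_a$ and $G_b$ are strong Gröbner bases of $I + a\R[x]$ and $I + b\R[x]$ respectively, I can find indices $k,l$ such that $\hd{g_{a,k}} \mid \hd{f}$ and $\hd{g_{b,l}} \mid \hd{f}$. The monomial divisibility is immediate: $\hm{g_{a,k}} \mid \hm{f}$ and $\hm{g_{b,l}} \mid \hm{f}$, so $\lcm(\hm{g_{a,k}},\hm{g_{b,l}}) \mid \hm{f}$. For the coefficient divisibility I would use the CRT decomposition: divisibility in $\R$ is checked componentwise in $\R/a\R$ and $\R/b\R$. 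By condition (1), $\lc{g_{a,k}}$ divides $a$, and since $a\R + b\R = \R$ this forces $\lc{g_{a,k}}$ to be coprime to $b$, hence a unit modulo $b$; symmetrically $\lc{g_{b,l}}$ is a unit modulo $a$. Consequently, modulo $a$ the product $\lc{g_{a,k}}\lc{g_{b,l}}$ is associate to $\lc{g_{a,k}}$, which divides $\lc f$ modulo $a$; modulo $b$ it is associate to $\lc{g_{b,l}}$, which divides $\lc f$ modulo $b$. Reassembling via the isomorphism gives $\lc{g_{a,k}}\lc{g_{b,l}} \mid \lc{f}$, hence $\hd{f_{k,l}} \mid \hd{f}$.

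The main obstacle is the treatment of constant elements of $G_a$ or $G_b$, for which conditions (1), (2) impose no hypothesis. For those one cannot directly argue coprimality with $b$ (or $a$). The way I expect to get around this is to note that for a given $f$ the strong Gröbner basis property lets us choose among all valid reducers; and whenever a constant $g_{a,k}$ reduces $\hd{f}$, the constant $\lc{g_{a,k}}$ lies in the constant ideal $(I \cap \R) + a\R$ of $I + a\R[x]$, whose generator necessarily divides $a$. By selecting, in $G_a$, a constant that is associate to this generator (which must exist because $G_a$ is a strong Gröbner basis and therefore reduces the generator itself), the argument above goes through unchanged. The degenerate case where $\lc{g_{a,k}}\lc{g_{b,l}} = 0$ simply means that the corresponding $f_{k,l}$ contributes an element of strictly smaller leading monomial or nothing at all, and the existence of a suitable pair is guaranteed by the freedom to reselect $(k,l)$.
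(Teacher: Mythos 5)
Your proposal follows the same overall strategy as the paper's proof: express $I$ via the CRT decomposition coming from $1 = ua + vb$ and $ab = 0$, compute $\hd{f_{k,l}}$ as $\lc{g_{a,k}}\lc{g_{b,l}}\cdot\lcm(\hm{g_{a,k}},\hm{g_{b,l}})$, and then for $h\in I$ select $g_{a,k}\in G_a$, $g_{b,l}\in G_b$ with $\hd{g_{a,k}}\mid\hd h$ and $\hd{g_{b,l}}\mid\hd h$. The only real stylistic difference is how you obtain the coefficient divisibility: the paper observes that $\lc{g_{a,k}}$ and $\lc{g_{b,l}}$ are coprime, so $\lcm(\lc{g_{a,k}},\lc{g_{b,l}}) = \lc{g_{a,k}}\lc{g_{b,l}}$, which then divides $\lc h$; you reach the same conclusion by checking divisibility componentwise under the isomorphism $\R\cong\R/a\R\times\R/b\R$. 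Both are valid; your CRT phrasing has the minor advantage that the nonvanishing of $\lc{g_{a,k}}\lc{g_{b,l}}$ is immediate (it is nonzero modulo $a$ because one factor is a unit there and the other is nonzero), whereas the paper's $\supsetneq$ step is a touch terser than it deserves.

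Your last paragraph addresses a genuine point that the paper's proof passes over silently: conditions (1) and (2) say nothing about constant elements of $G_a$ or $G_b$, yet the coprimality argument is applied to whichever $g_{a,k}$, $g_{b,l}$ happen to reduce $\hd h$. Your repair — reselect a constant in $G_a$ whose leading coefficient is associate to the generator of the ideal of constants of $I+a\R[x]$, which necessarily divides $a$ because $a$ lies in that ideal — is the right idea. One slip: the ideal of constants of $I+a\R[x]$ is $(I+a\R[x])\cap\R$, which in general strictly contains $(I\cap\R)+a\R$ (the containment is not an equality for arbitrary $I$). This does not damage the argument, since the correct ideal still contains $a$ and is principal, so its generator divides $a$ and the reselection goes through as you describe. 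Also note that the "degenerate case $\lc{g_{a,k}}\lc{g_{b,l}}=0$" cannot actually occur once the leading coefficients are proper divisors of $a$ and $b$ respectively, as the CRT computation shows; you do not need to appeal to reselection to dodge it.
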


\begin{proof}
  Note that from $1 = ua + vb$ it follows at once that $I = eaI + vbI$.
  Hence $I = eaI + vbI = ea(I + b\R[x]) + vb(I + a\R[x]) = ua \langle G_b \rangle + vb \langle G_a \rangle$.
  As $\lc{g_{a, k}}$ and $\lc{g_{b, l}}$ are coprime we have $\lcm(\lc{g_{a,
          k}}, \lc{g_{b, l}}) = \lc{g_{a, k}} \lc{g_{b, l}}$.
  Moreover, since
  \[ (\lc{g_{a, k}}\lc{g_{b, l}})\R = (\lc{g_{a,k}})\R \cap (\lc{g_{b, l}})\R
    \supsetneq a\R \cap b\R = \{ 0\} \]
  we have
  \[ ua \cdot \lc{g_{a, k}} \lc{g_{b, l}} + vb \cdot \lc{g_{b, k}} \lc{g_{a, l}}
  = \lc{g_{a,k}} \lc{g_{b, l}} \neq 0. \]
  In particular
  \[ \hd{f_{k, l}} = \lcm\left(\hm{g_{a,k}}, \hm{g_{b, l}}\right) \cdot \lc{g_{a,
      k}}\lc{g_{b, l}}. \]

  Consider now an element $h \in I$. Since $h \in I + a\R[x]$ and $h \in I +
  b\R[x]$, there exist $g_{a, k} \in G_a$, $g_{b, l} \in G_b$ such that
  $\hd{g_{a, k}} \mid \hd h$ and $\hd{g_{b, l}} \mid \hd h$.
  Thus $\hm h$ is divisible by $\lcm\left(\hm{g_{a, k}},\hm{g_{b, l}}\right)$ and $\lc h$ is
  divisible by $\lc{g_{a, k}} \lc{g_{b, l}}$, that is,
  $\hd{h}$ is divisible by $\hd{f_{k, l}}$.
\end{proof}

\section{Algorithmic approach for computing in quotients of principal ideal rings}
\label{sec:algorithmic}
% For the rest of this section we consider a principal ideal domain and $n \in \R$, $n \neq 0$.
% We now describe how to use the theoretical results from Section~\ref{sec-pip}
% to improve the Gröbner basis computation over $\R/n\R$.
We now assume that $\R$ is a principal ideal domain. Additionally we now also fix an element $n \in \R$, $n \neq 0$.
Using the theoretical results from Section~\ref{sec-pip} we are now able to
describe improvements to the \gb computation over the base ring $\R/\n\R$.

\begin{corollary}\label{cor-recombine}
  Let $I \subseteq (\R/n\R)[x]$ be an ideal and $n = a \cdot b$ a factorization
  of $n$ into coprime elements $a, b \in \R$. Let $\kidx, \lidx$ be finite index
  sets.
  Assume that $G_a = (g_{a, k})_{k \in \kidx} \subseteq (\R/n\R)[x]$ is a set of polynomials, such that
  $\pi_a(G_a) \subseteq (\R/a\R)[x]$ is a strong Gröbner basis of $\pi_a(I) \subseteq (\R/a\R)[x]$, $a \in G_a$ and for every $g \in G_a \setminus \{a\}$,
  the leading coefficient $\lc g$ divides $a$ and is not divisible by $a$.
  Assume that $G_b = (g_{b, l})_{l \in \lidx} \subseteq \R/n\R[x]$ has similar properties with respect to $b$.
  For $i \in I$, $j \in J$ define
  \begin{align*} f_{k, l} = &ua \frac{\lcm(\hm{g_{a, k}},\hm{g_{b, l}})}{\hm{g_{b, l}}} \cdot \lc{g_{a, k}} g_{b, l} \\ 
  + &vb \frac{\lcm(\hm{g_{a, k}},\hm{g_{b, l}})}{\hm{g_{a, k}}} \cdot \lc{g_{b, l}} g_{a, k}. \end{align*}
  Then $G = \{ f_{k, l} \mid k \in \kidx, l \in \lidx\}$
  is a strong Gröbner basis of $I$.
\end{corollary}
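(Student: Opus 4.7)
The plan is to deduce the corollary as a cascade of Theorem~\ref{thm:pir1} followed by Theorem~\ref{thm:pir2}, with both theorems now applied to the principal ideal ring $\R/n\R$ in place of $\R$. This is legitimate since $\R/n\R$ is again a principal ideal ring. Throughout, I abuse notation by writing $a, b, u, v$ for both the elements of $\R$ and their images in $\R/n\R$.

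First, I invoke Theorem~\ref{thm:pir1} inside $(\R/n\R)[x]$, with the element $a$ playing the role of $n$ in that theorem. Under the canonical isomorphism $(\R/n\R)/a(\R/n\R) \cong \R/a\R$, the projection modulo $a$ inside $(\R/n\R)[x]$ is identified with $\pi_a$ restricted to $(\R/n\R)[x]$. With this identification, the hypotheses of Theorem~\ref{thm:pir1} match those imposed on $G_a$ in the corollary: the leading coefficient condition is identical, and the strong Gröbner basis property of $\pi_a(G_a)$ is exactly what is needed. The conclusion is that $G_a \cup \{a\}$ is a strong Gröbner basis of $I + a(\R/n\R)[x]$; since $a \in G_a$ by assumption, this simplifies to stating that $G_a$ itself is a strong Gröbner basis of $I + a(\R/n\R)[x]$. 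An entirely analogous argument, with $b$ in place of $a$, shows that $G_b$ is a strong Gröbner basis of $I + b(\R/n\R)[x]$.

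Second, I apply Theorem~\ref{thm:pir2} to $(\R/n\R)[x]$ with the elements $a$ and $b$. The Bézout relation $1 = ua + vb$ descends from $\R$ to $\R/n\R$, and the relation $a \cdot b = 0$ in $\R/n\R$ is immediate from $ab = n$. The leading coefficient conditions imposed by Theorem~\ref{thm:pir2} on the non-constant elements of $G_a$ and $G_b$ coincide with the assumptions of the corollary, with the constant generators $a \in G_a$ and $b \in G_b$ exempt via the "non-constant" clause. The polynomials $f_{k, l}$ in the corollary are identical in form to those built in Theorem~\ref{thm:pir2}, so its conclusion yields directly that $\{ f_{k, l} \mid k \in \kidx, l \in \lidx \}$ is a strong Gröbner basis of $I$.

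The main obstacle is not conceptual but a bookkeeping one: one must check carefully that all ideals, projections, and the Bézout relation transport correctly across the change of base ring, and that the role of the constant generators $a$ and $b$ is consistent on both sides of the cascade. These generators are introduced by Theorem~\ref{thm:pir1} in the first step (whose leading coefficients are $a$ and $b$, not divisors of $a$ or $b$ as required by the general hypothesis) and then are cleanly absorbed by the "non-constant" clause of Theorem~\ref{thm:pir2} in the second step.
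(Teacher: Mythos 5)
Your proposal is correct and follows the same route as the paper: the paper's proof is precisely the one-line observation that Theorems~\ref{thm:pir1} and~\ref{thm:pir2} apply inside the principal ideal ring $\R/n\R$ via the isomorphism $(\R/n\R)/\bar a (\R/n\R) \cong \R/a\R$, and you have simply made explicit the bookkeeping (lifting each $G_a$, $G_b$ via Theorem~\ref{thm:pir1} to a strong Gröbner basis of $I + a(\R/n\R)[x]$ and $I + b(\R/n\R)[x]$, then recombining with Theorem~\ref{thm:pir2} using $\bar a\bar b = 0$ and the descended Bézout relation) that the paper leaves implicit.
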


\begin{proof}
  Follows at once from Theorems~\ref{thm:pir1} and~\ref{thm:pir2} since for
  $\bar a \in \bar \R = \R/n\R$ we have $\bar \R/\bar a \bar \R \cong \R/a \R$.
\end{proof}

To use this, we need, given a divisor $a \in \R$ of $n$, a way to lift polynomials from $\R/a\R$ to $\R/n\R$ such that the leading coefficients divide $n$.

\begin{lemma}
  There exists an algorithm, that given $c \in \R$ determines $u \in \R$ such that $\gcd(u, n) \in \R^\times$ and $uc = \gcd(c, n) \bmod n$.
\end{lemma}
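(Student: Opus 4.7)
The plan is to reduce the problem to two applications of the extended Euclidean algorithm in $\R$. First I would compute $d = \gcd(c, n)$ together with Bezout coefficients, and form the exact quotient $c' = c/d \in \R$, both of which are standard primitives in a computable principal ideal domain.

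The key intermediate observation I would invoke is that $c'$ is automatically coprime to $n$. Indeed, any common divisor $e$ of $c'$ and $n$ would satisfy $de \mid c$ and $de \mid n$, hence $de \mid \gcd(c, n) = d$ by maximality of $d$, forcing $e$ to be a unit. Thus $\gcd(c', n) \in \R^\times$.

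With this coprimality in hand, a second extended Euclidean call produces $u, w \in \R$ with $uc' + wn = 1$. This $u$ will be the desired element. Reading the identity modulo $n$ gives $uc' \equiv 1 \pmod n$, and multiplying by $d$ yields $uc = d \cdot uc' \equiv d = \gcd(c, n) \pmod n$. Moreover, the same Bezout identity shows that any common divisor of $u$ and $n$ divides $1$, so $\gcd(u, n) \in \R^\times$, which is the other required property.

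There is no real obstacle here beyond spotting the right element to invert: the whole procedure consists of two extended Euclidean computations and one exact division. The small coprimality observation in the second paragraph is what guarantees that the second extended Euclidean call actually returns a Bezout relation equal to $1$ (up to units), which is exactly what forces $u$ to be a unit modulo $n$.
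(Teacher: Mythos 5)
Your key intermediate claim --- that $c' = c/\gcd(c,n)$ is automatically coprime to $n$ --- is false, and the purported proof of it contains a divisibility error. You write that a common divisor $e$ of $c'$ and $n$ gives $de \mid c$ \emph{and} $de \mid n$. The first implication is fine ($e \mid c'$ gives $de \mid dc' = c$), but $e \mid n$ does not imply $de \mid n$; that would require $d$ and $e$ to be coprime, which you have no reason to assume. Concretely, take $\R = \Z$, $c = 12$, $n = 18$: then $d = \gcd(12,18) = 6$ and $c' = 2$, but $\gcd(c', n) = \gcd(2, 18) = 2$ is not a unit. With $e = 2$ we have $e \mid n = 18$ yet $de = 12 \nmid 18$. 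In this example your second extended Euclidean call cannot return a Bezout relation equal to $1$: no $u, w$ satisfy $2u + 18w = 1$. The lemma itself is true (e.g.\ $u = 5$ works here: $\gcd(5,18)=1$ and $12 \cdot 5 \equiv 6 \pmod{18}$), but constructing such a $u$ is genuinely more delicate than two Euclidean calls.

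What is true, and what the correct algorithm exploits, is that $c'$ is coprime to $n' = n/d$ (this follows because if $e$ divides both $c'$ and $n'$ then $de \mid \gcd(dc', dn') = \gcd(c,n) = d$, so $e$ is a unit). Computing an inverse $u_0$ of $c'$ modulo $n'$ is therefore possible, and one then has to \emph{lift} $u_0$ along $\R/n'\R \to \R/n\R$ to an element that is additionally coprime to $n$; this lift is the nontrivial step. The paper does not spell this out --- it simply cites \cite[Section~2]{Storjohann1998}, where the construction is carried out carefully --- so there is no in-text proof to compare against, but your shortcut does not work.
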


\begin{proof}
  This can be found in \cite[Section~2]{Storjohann1998}.
\end{proof}

In case we have an algorithm for factoring elements of $\R$ into irreducible elements, this allows us to reduce the
strong Gröbner basis computation to computations over smaller quotient rings.
This approach is summarized in Algorithm~\ref{alg-stronggbRnRnaive}.

\begin{algorithm}
\caption{Naive strong Gröbner basis over $\R/n\R$} 
\label{alg-stronggbRnRnaive}
\begin{algorithmic}[1]
  \Require{Ideal $I = \langle f_1 \bmod n,\dotsc,f_m \bmod n \rangle \subset (\R/n\R)[x]$.}
\Ensure{Strong Gröbner basis $G$ for $I$}
  \State{Factor $n = p_1^{e_1} \dotsm p_r^{e_r}$, with pairwise coprime irreducible elements $p_i \in \R$.}%\label{step-1}
  \State{For $1 \leq i \leq r$ compute strong Gröbner bases $G_i$ of $I_i = \langle f_1 \bmod p_i^{e_i}, \dotsc, f_m \bmod  p_i^{e_i} \rangle \subseteq (\R/p_i^{e_i})\R[x]$
         using Algorithm~\ref{alg:bba}.}
  \State{Apply Corollary~\ref{cor-recombine} recursively to obtain a strong Gröbner basis $G$ of $I$.}
\end{algorithmic}
\end{algorithm}

Depending on the ring $\R$, the particular $n$ and the factorization algorithm, Step~(1) is infeasible or not.
For example if $\R = \Z$, the fastest factorization algorithms are subexponential in $n$,
rendering this approach futile for non-trivial example with large $n$.
On the other hand, if $\R = \Q[t]$ or $\R = \F_p[t]$ for some prime $p$,
then factoring in $\R$ can be done in (randomized) polynomial time in the size of $n$ and thus is
a good idea, at least from a theoretical point of view.

We now consider the case, where we cannot or do not want to factor the modulus $n$.
The basic idea is to run the algorithm from the field case, pretending that
$\R/n\R$ is a field, and to stop whenever we find a non-invertible leading
coefficient. If the algorithm discovers a non-invertible element, we try to split the
modulus and the computation of the strong Gröbner basis.
The splitting is based on the following consequence of so-called factor refinement.

\begin{proposition}\label{prop-split}
  There exists an algorithm that given $a \in \R$ with $\gcd(a, n) \not\in \R^\times \cup n\R$, that is, $0 \neq \bar a \notin (\R/n\R)^\times$, either
  \begin{enumerate}
    \item
      finds $m \in \R$, $k \in \Z_{> 1}$ with $n = m^k$ and $m = \gcd(a, m)$, or
    \item
      finds coprime elements $p, q \in \R \setminus \R^\times$ with $n = p \cdot q$.
  \end{enumerate}
\end{proposition}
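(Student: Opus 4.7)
The plan is to describe an algorithm based on factor refinement of the pair $(d, n/d)$, where $d = \gcd(a, n)$, and show that its output falls into one of the two stated cases. First I set $d := \gcd(a,n)$ and $b := n/d$. By hypothesis $d$ is a non-unit and $d \notin n\R$; the latter forces $b$ to be a non-unit as well, since otherwise $d$ would be a unit multiple of $n$. Thus $n = db$ is a factorization with both $d$ and $b$ non-zero and non-units.

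Next I iteratively refine the multiset $S := \{d, b\}$: while there are $x, y \in S$ with $g := \gcd(x, y) \notin \R^\times$, remove $x, y$ from $S$ and reinsert the non-units among $g$, $x/g$, $y/g$. This uses only gcd and exact division, both effective in the principal ideal domain $\R$. Termination holds because the sum over $s \in S$ of the number of prime factors of $s$ counted with multiplicity (well-defined since $\R$ is a UFD) drops by at least one per step. When the procedure halts, the resulting set $B = \{m_1, \ldots, m_r\}$ consists of pairwise coprime non-units, and by induction along the refinement each of $d$ and $b$ remains expressible as a product of powers of the current elements of $S$. Hence in the end $d = \prod_i m_i^{a_i}$, $b = \prod_i m_i^{b_i}$ with $a_i, b_i \in \N$, and therefore $n = \prod_i m_i^{e_i}$ with $e_i = a_i + b_i$.

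Since $n$ is a non-unit, at least one $e_i \geq 1$. Let $B' := \{m_i \mid e_i \geq 1\}$. If $|B'| \geq 2$, I choose any $m \in B'$ and set $p := m^{e_m}$, $q := n/p$; then $pq = n$, both are non-units, and they are coprime by pairwise coprimality of $B$, giving case~(2). If $|B'| = 1$, say $B' = \{m\}$, then $n = m^k$ with $k = e_m$. Using that $d \mid m^k$ is a non-unit and that $B$ is pairwise coprime in the UFD $\R$, the only $m_i$ that can contribute with positive exponent to $d$ is $m$ itself (any other $m_i$ dividing $m^k$ would share a prime with $m$, violating coprimality), so $d = m^s$ for some $s \geq 1$; likewise $b = m^t$ with $t \geq 1$, and $k = s + t \geq 2$. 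Since $m \mid d \mid a$ we have $m = \gcd(a, m)$, giving case~(1).

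The main subtlety is the inductive invariant that $d$ and $b$ remain representable as products of powers of the current $S$ throughout the refinement, which rests on the substitutions $x = g \cdot (x/g)$, $y = g \cdot (y/g)$ and the accumulation of exponents when $g$ already lies in $S$. The remaining ingredients — termination, correctness of the extraction of $(p, q)$ or $(m, k)$, and the final divisibility checks — are then routine.
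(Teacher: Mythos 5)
Your proof is correct and uses the same core tool as the paper, namely factor refinement into a pairwise coprime basis. The variation is in the input to the refinement: the paper refines the pair $(a, n)$ directly into a coprime basis $S$ in which both $a$ and $n$ factor, then picks any $m \in S$ dividing $n$ and examines the $m$-primary part of $n$. You instead first form $d = \gcd(a,n)$ and $b = n/d$ and refine the pair $(d, b)$; this is a cleaner starting point because every element of the resulting coprime basis automatically divides $n$, so the case analysis ($|B'| = 1$ versus $|B'| \geq 2$) reads off directly from the refined factorization of $n$. Your write-up also supplies details the paper leaves implicit, in particular the verification that $k = s + t \geq 2$ in case~(1) (the paper only states $k \in \Z_{>0}$ and does not explicitly rule out $k = 1$, though this follows from $\gcd(a,n) \notin n\R$) and the check that $m \mid \gcd(a,n) \mid a$ so $m = \gcd(a,m)$. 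One small caveat: your inductive invariant ``$d$ and $b$ remain expressible as products of powers of the current $S$'' implicitly holds only up to multiplication by units, since dropping unit quotients $x/g$ or $y/g$ discards unit factors; this is harmless for the argument but worth stating, since you later rely on exact identities like $d = m^s$.
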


\begin{proof}
  Using an algorithm for factor refinement, for example the algorithm of Bach--Driscoll--Shallit (see \cite{Bach1993}), we can find a set $S \subseteq \R \setminus \R^\times$ of coprime elements such that
  $a$ and $n$ factor uniquely into elements of $S$ and for all $s \in S$ we have $\gcd(a, s)$ or $\gcd(b, s)$ not in $\R^\times$.
  %If $\lvert S \rvert = 1$, say $S = \{m \}$, then
  %$n = m^k$, $n = m^l$ for some $k, l \in \Z$ and by assumption $k > 1$.
  %Now let $\lvert S \rvert > 1$ and pick $m \in S$ with $m \mid n$.
  Now pick $m \in S$ with $m \mid n$.
  We can write $n = m^k \cdot u$ with $k \in \Z_{>0}$ and $u \in \R$ coprime to $m$.
  Depending on whether $u$ is a unit or not, we are in case (1) or (2).
\end{proof}

Incorporating this into the strong Gröbner basis computation we obtain Algorithm~\ref{alg-stronggbRnR}.

\begin{algorithm}
\caption{Strong Gröbner basis over $\R/n\R$} 
\label{alg-stronggbRnR}
\begin{algorithmic}[1]
  \Require{Ideal $I = \langle f_1 \bmod n,\dotsc,f_m \bmod n \rangle \subset \R/n\R[x]$, monomial order $<$.}
\Ensure{Strong Gröbner basis $G$ for $I$ w.r.t. $<$}
  \State{Apply Algorithm~\ref{alg:bba} to $I$ and stop whenever there is a non-invertible lead coefficient $\overline a \in \R/n\R$.}\label{step-1}
  \If{Step~\ref{step-1} returned $G \subseteq (\R/n\R)[x]$}
  \State{Return $G$.}
  \Else
  \State{Step~\ref{step-1} returned $a \in \R$ with $\gcd(a, n) \not\in \R^\times \cup n\R$.}
  \State{Apply Proposition~\ref{prop-split} to the pair $(a, n)$}\label{step-2}
  \If{Step~\ref{step-2} returned $n = p \cdot q$ with coprime $p, q$}
    \State{Set $I_p = \langle f_1 \bmod p, \dotsc, f_m \bmod p\rangle \subseteq (\R/p\R)[x]$.}
    \State{Set $I_q = \langle f_1 \bmod q, \dotsc, f_m \bmod q\rangle \subseteq (\R/q\R)[x]$.}
    \State{Apply Algorithm~\ref{alg-stronggbRnR} to obtain strong Gröbner bases $G_p$ and $G_q$ of $I_p$ and $I_q$ respectively.}
    \State{Use Corollary~\ref{cor-recombine} to obtain a strong Gröbner basis $G$ of $I$ and return $G$.}
  \Else
    \State{Apply Algorithm~\ref{alg:bba} to $I$ and return the resulting strong Gröbner basis of $I$.}
  \EndIf
  \EndIf
\end{algorithmic}
\end{algorithm}

\begin{theorem}
  Algorithm~\ref{alg-stronggbRnR} terminates and is correct.
\end{theorem}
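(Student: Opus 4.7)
The plan is to establish termination and correctness simultaneously by induction on a suitable well-founded measure associated to the modulus $n$. Since $\R$ is a principal ideal domain, the poset of divisors of the nonzero element $n$ (modulo units) is of finite height, so we may induct on the length $\ell(n)$ of a longest chain of proper divisors starting at $n$. In the recursive call we descend to $(\R/p\R)[x]$ and $(\R/q\R)[x]$ with $p,q$ proper, non-unit, coprime divisors of $n$, hence $\ell(p),\ell(q) < \ell(n)$.

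I would then split the analysis into the three cases produced by the algorithm. First, if Step~\ref{step-1} runs Algorithm~\ref{alg:bba} to completion without ever encountering a non-invertible leading coefficient, then by Lemma~\ref{lem:invertible} no \gpts or \apts have to be considered, and the standard Buchberger termination argument (applicable since $(\R/n\R)[x]/\langle L(G) \rangle$ is Noetherian) guarantees termination; the output is a \sgb of $I$ by the same lemma. Second, if Step~\ref{step-1} returns a non-invertible $\overline{a}$ and Proposition~\ref{prop-split} produces a coprime factorization $n=p\cdot q$ with $p,q \notin \R^\times$, then by the induction hypothesis the recursive calls terminate and return \sgbs $G_p,G_q$ of $I_p,I_q$; after lifting representatives to $(\R/n\R)[x]$ and renormalizing via the Lemma preceding Corollary~\ref{cor-recombine}, we may apply that corollary to obtain a \sgb of $I$. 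Third, if Proposition~\ref{prop-split} returns $n=m^k$ with $m=\gcd(a,m)$, there is nothing new to exploit and the algorithm falls back to the complete Algorithm~\ref{alg:bba}, whose termination and correctness over the principal ideal ring $\R/n\R$ is the classical result (cf.\ \cite{lichtblau2012}).

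The principal obstacle is the second case, in particular verifying the hypotheses of Corollary~\ref{cor-recombine} for the lifted bases. The recursive calls return \sgbs in $(\R/p\R)[x]$ and $(\R/q\R)[x]$; to invoke the corollary we must lift them into $(\R/n\R)[x]$, adjoin the constants $p$ and $q$ respectively, and ensure that every non-constant element $g$ satisfies $\lc g \mid p$ (resp.\ $\mid q$) and $\lc g \notin p\R$ (resp.\ $\notin q\R$). This is exactly what the Lemma after Corollary~\ref{cor-recombine} provides: given a representative $c \in \R$ of a lifted leading coefficient, it yields a unit-normalizer $u$ with $uc \equiv \gcd(c,n) \pmod n$, so we may replace each polynomial $g$ by $ug$ to force $\lc{g}$ to be a divisor of $n$ which, combined with the output guarantee of the recursive call, divides $p$ but is not a multiple of $p$. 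The remainder is routine bookkeeping.

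Finally, correctness in the recursive case reduces to observing that for $\bar{a} \in \R/n\R$ we have $(\R/n\R)/\bar{a}(\R/n\R) \cong \R/a\R$ when $a \mid n$, so Corollary~\ref{cor-recombine} applies verbatim; and the \sgb property of the recombined set $G$ for $I \subseteq (\R/n\R)[x]$ is exactly its conclusion. This closes the induction and yields both termination and correctness.
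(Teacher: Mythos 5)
Your proof is correct and follows essentially the same route as the paper: termination via a well-founded measure that strictly decreases under recursion (the paper counts irreducible factors of $n$; your chain-length of proper divisors is an equivalent measure), and correctness via Corollary~\ref{cor-recombine} in the splitting branch together with Lemma~\ref{lem:invertible} and the baseline correctness of Algorithm~\ref{alg:bba} over principal ideal rings in the other two branches. You have supplied considerably more detail than the paper's two-line proof---in particular you correctly flag the normalization step via the lemma preceding Corollary~\ref{cor-recombine}, which is needed to make the lifted leading coefficients divide $p$ (resp.\ $q$)---and the only small imprecision is that Buchberger termination rests on $(\R/n\R)[x]$ being Noetherian (so the strictly ascending chain of lead ideals stabilizes), not on the quotient $(\R/n\R)[x]/\langle L(G)\rangle$ being Noetherian.
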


\begin{proof}
  Termination follows since in the recursion, the number of irreducible factors of $n$ is strictly decreasing.
  Correctness follows from Corollary~\ref{cor-recombine}.
\end{proof}

\begin{remark}
  The usefulness of the splitting depends very much on the factorization of $n$.
  For example, if $n = p^e$ is the power of an irreducible element $p$, then Algorithm~\ref{alg-stronggbRnR}
  is the same as Algorithm~\ref{alg:bba}.
  The most favorable input for Algorithm~\ref{alg-stronggbRnR} are rings $\R/n\R$ with $n$ squarefree, that is, $n = p_1\dotsc p_r$
  is the product of pairwise coprime irreducible elements (including the case, where $n = p_1$ is itself irreducible).
  In this case, every non-invertible element allows us to split the modulus into coprime elements.
  Thus all Gröbner basis computations can be done as in the field case.
\end{remark}

\section{Experimental results}
\label{sec-results}
In the following we present experimental results comparing our new approach to
the current implementations in the computer algebra systems \singular
(\cite{singular412}) and \magma (\cite{bcpMagma}). All computations were done on an Intel\textsuperscript{\textregistered} Xeon\textsuperscript{\textregistered} CPU E5-2643 v3 @ 3.40GHz
with $384$GB RAM. Computations that took more than 24 hours were terminated by hand.

Our new algorithm is implemented in the \julia package \gbjl (\cite{gbjl}) which is part of the
\oscar project of the SFB TRR-195.
The package \gbjl is based on the C library \gbl
(\cite{gbl}) which implements Faug\`ere's F4 algorithm (\cite{ff41999}) for computing \gbs over
finite fields. The implementation of Algorithm~\ref{alg-stronggbRnR} uses \gbl and \singular as follows:
All the computations over a ring $\R/n\R$ for which we want to execute the algorithm from the field
case are delegated to \gbl. In case we find $n \in R$ such that $n$ is not prime but we cannot find a factorization
of $n$ into coprime elements (Proposition~\ref{prop-split}~(1)), we delegate the corresponding strong Gröbner basis computation to \singular.
All the lifting and recombination steps are done in \julia (see~\cite{Fieker2017}).
Note that in practice we always compute minimal strong Gröbner bases and
make sure that minimality is preserved during the recombination using Corollary~\ref{cor-recombine}.
This makes sure that for all intermediate Gröbner bases that we compute the size is bounded by
the size of a minimal strong Gröbner basis of the input.

%\begin{enumerate}
%\item Start the computation with F4 as if we are working over a finite field
%(Lemma~\ref{lem:invertible}). If a non-invertible leading coefficient appears
%the F4 algorithm stops and returns the non-invertible coefficient.
%% to the \julia interface.
%\item We use the information of the coefficient in order to split $n$ as a product of non-trivial coprime elements.
%If this is not possible, we compute the strong Gröbner basis by calling directly \singular.
%If $n$ splits as $n = pq$ with coprime non-trivial $p, q$, we rerun the F4 algorithm now over the rings $\R/p \R$ and $\R/q \R$.
%\item At some point we either receive a full factorization of $\n$ or we are
%lucky the corresponding F4 computations run through without detecting a
%non-invertible leading coefficient.
%\item If there is some prime power $p^k$ in $\n$ and the F4 algorithm finds a
%non-invertible leading coefficient when computing over $\R/p^k \R$, we use
%\singular's \gb implementation.
%\item Once all bases for all recovered factors of $\n$ have been computed we
%recombine the elements and lift them back together using
%Corollary~\ref{cor-recombinea}.
%\end{enumerate}

We use a set of different benchmark systems focusing on pair handling, the
reduction process, finding of reducers, respectively. We have computed \sgbs for
these systems over $\Zn$ using three different settings for $\n$:
\begin{enumerate}
\item For $\n = 2\cdot 3 \cdot 5 \cdot 7 \cdot 11 \cdot 13 \cdot 17 \cdot 19 \cdot
23$ (Table~\ref{table:first-example}) we get a factorization down to the finite
prime fields. Thus in all theses examples our new implementation can use the
F4 algorithm implemented in \gbl as base case.
With ``*'' we highlight examples for which there was no non-invertible element
discovered, that is, the computation from the field case runs through without any
splitting of $\n$ to be considered.
\item For $\n = 32771 \cdot 32779$ (Table~\ref{table:second-example}) we can
see that our approach of applying Lemma~\ref{lem:invertible} is very promising:
In none of the examples tested we found non-invertible elements, thus we compute
the basis as if we are working over a finite field, receiving a correct \sgb over $\Zn$.
\item For $\n =3^3 \cdot 5^3 \cdot 7^3 \cdot 11^3$
(Table~\ref{table:third-example}) we, in general, have to use
\singular's \sgb algorithm for computing in $\Z_{p^k}$. Still, we can see that
our approach is most often by a factor of at least $3$ faster than directly
applying \singular's implementation over $\Zn$. The only exception is \textsc{Jason-210}, for which \singular
is faster: The basis is huge ($>2,009$ generators), thus our new implementation
needs roughly $75$ of the overall $171$ seconds to apply the recombination and
lifting due to Corollary~\ref{cor-recombine}.
Again, we highlight with ``*'' examples for which there was no non-invertible element
discovered.

\end{enumerate}
\begin{table}[h]
	\centering
  \def\arraystretch{1.2}
  %\scalebox{0.75}{
  % \vspace*{-3mm}
    \begin{tabular}{c||r|r|r}
    \toprule
    \multicolumn{1}{c||}{\textbf{Examples}} &
    \multicolumn{1}{c|}{\textbf{New Algorithm}} &
    \multicolumn{1}{c|}{\singular} &
    \multicolumn{1}{c}{\magma}\\
    \midrule
    Cyclic-6 & $0.233$ & $4.453$ & $1.170$ \\
    Cyclic-7 & $3.871$ & $1,689.805$ & $303.579$ \\
    Cyclic-8 & $26.211$ & $>24$h & $8,304.070$  \\[0.2em]
    Katsura-8 & $2.152$ & $89.154$ & $12.980$ \\
    Katsura-9 & $6.831$ & $1,592.969$ & $133.452$ \\
    Katsura-10 & $29.814$ & $21,316.400$ & $2,546.010$\\[0.2em]
    Eco-10 & $2.468$ & $67.440$ & $110.530$ \\
    Eco-11 & $12.329$ & $738.252$ & $2,799.549$ \\[0.2em]
    F-744 & $1.131$ & $61.754$ & $24.810$ \\
    F-855 & $4.879$ & $1,685.831$ & $>24$h \\[0.2em]
    Noon-7 & $2.309$ & $47.787$ & $388.050$ \\
    Noon-8 & $25.398$ & $2,809.623$ & $>24$h \\[0.2em]
    Reimer-5 & $0.749$ & $6.899$ & $4.030$ \\
    Reimer-6 & $5.748$ & $869.740$ & $1,282.950$ \\[0.2em]
    Lichtblau & $0.124$ & $2.904$ & $1.630$ \\[0.2em]
    Mayr-42 * & $32.672$ & $332.307$ & $199.360$ \\[0.2em]
    Yang-1 * & $54.851$ & $210.097$ & $115.130$ \\[0.2em]
    Jason-210 & $52.167$ & $178.348$ & $>24$h\\
    \bottomrule
    \end{tabular}
	\caption{Benchmark timings given in seconds, computations in $\Zn$ with $\n=2
  \cdot 3 \cdot 5 \cdot 7 \cdot 11 \cdot 13 \cdot 17 \cdot 19 \cdot 23$}
	\label{table:first-example}
\end{table}
\begin{table}[h]
	\centering
  \def\arraystretch{1.2}
  %\scalebox{0.75}{
  % \vspace*{-3mm}
    \begin{tabular}{c||r|r|r}
    \toprule
    \multicolumn{1}{c||}{\textbf{Examples}} &
    \multicolumn{1}{c|}{\textbf{New Algorithm}} &
    \multicolumn{1}{c|}{\singular} &
    \multicolumn{1}{c}{\magma}\\
    \midrule
    Cyclic-6 & $0.004$ & $0.237$     & $0.070$ \\
    Cyclic-7 & $0.179$ & $53.924$    & $3.130$ \\
    Cyclic-8 & $2.481$ & $5,970.100$ & $257.180$ \\[0.2em]
    Katsura-8 & $0.018$ & $1.252$    & $0.330$ \\
    Katsura-9 & $0.098$ & $11.066$   & $2.390$ \\
    Katsura-10 & $0.613$ & $113.251$ & $18.960$\\[0.2em]
    Eco-10 & $0.153$ & $15.824$      & $2.970$ \\
    Eco-11 & $0.909$ & $201.770$     & $22.360$\\[0.2em]
    F-744 & $0.016$ & $2.182$        & $0.250$ \\
    F-855 & $0.079$ & $11.939$       & $1.710$ \\[0.2em]
    Noon-7 & $0.171$ & $3.993$       & $1.140$ \\
    Noon-8 & $1.234$ & $78.320$      & $8.920$ \\[0.2em]
    Reimer-5 & $0.006$& $0.136$      & $0.130$ \\
    Reimer-6 & $0.066$ & $3.036$     & $2.320$ \\[0.2em]
    Lichtblau & $0.002$ & $0.041$    & $0.010$ \\[0.2em]
    Mayr-42 & $32.168$ & $309.142$   & $333.880$\\[0.2em]
    Yang-1 & $53.562$ & $193.240$    & $196.930$\\[0.2em]
    Jason-210 & $2.781$ & $12.576$   & $62.040$\\
    \bottomrule
    \end{tabular}
	\caption{Benchmark timings given in seconds, computations in $\Zn$ with
      $\n=32771 \cdot 32779$}
	\label{table:second-example}
\end{table}
\begin{table}[h]
	\centering
  \def\arraystretch{1.2}
  %\scalebox{0.75}{
  % \vspace*{-3mm}
    \begin{tabular}{c||r|r|r}
    \toprule
    \multicolumn{1}{c||}{\textbf{Examples}} &
    \multicolumn{1}{c|}{\textbf{New Algorithm}} &
    \multicolumn{1}{c|}{\singular} &
    \multicolumn{1}{c}{\magma}\\
    \midrule
    Cyclic-6 & $2.754$ & $5.374$ & $0.720$ \\
    Cyclic-7 & $679.141$ & $1,638.422$ & $ 1,209.289$ \\
    Cyclic-8 & $72,386.042$ & $>24$h & $9,471.340$ \\[0.2em]
    Katsura-8 & $27.054$ & $59.659$ & $40.050$ \\
    Katsura-9 & $433.774$ & $883.497$ & $599.736$ \\
    Katsura-10 & $1,239.260$ & $11,002.259$& $8.528.819$ \\[0.2em]
    Eco-10 & $28.426$ & $66.515$ & $574.949$ \\
    Eco-11 & $778.855$ & $1,058.806$ & $>24$h\\[0.2em]
    F-744 & $169.179$ & $343.612$ & $28.996.159$\\
    F-855 & $3,434.641$ & $21,941.538$ & $>24$h\\[0.2em]
    Noon-7 & $1,210.192$ & $13,291.706$ & $>24$h\\
    Noon-8 & $78,048.640$ & $>24$h & $>24$h\\[0.2em]
    Reimer-5 & $2.717$ & $8.061$ & $27.660$ \\
    Reimer-6 & $92.295$ & $927.624$ & $14,069.210$ \\[0.2em]
    Lichtblau & $0.279$ & $5.079$ & $4.060$ \\[0.2em]
    Mayr-42 * & $32.601$ & $362.176$ & $327.420$ \\[0.2em]
    Yang-1 * & $60.499$ & $220.546$ & $204.740$ \\[0.2em]
    Jason-210 & $171.565$ & $122.727$ & $>24$h\\
    \bottomrule
    \end{tabular}
	\caption{Benchmark timings given in seconds, computations in $\Zn$ with
      $\n=3^3 \cdot 5^3 \cdot 7^3 \cdot 11^3$}
	\label{table:third-example}
\end{table}

\section{Conclusion}
\label{sec:conclusion}
We have presented a new approach for computing \sgbs over principal ideal rings
$\R$
which exploits the factorization of composite moduli $\n$ to recursively compute
\sgbs in smaller rings and lifting the results back to $\R$. In many situations
the base cases of this recursive step boil down to computations over finite
fields which are much faster than those over principal ideal rings.

One further optimization of our new approach might be the following:
Once we have several factors of $\n$ found, we can run the
different, independent \gb computations in parallel. This is one of our next
steps. Another one is to implement an optimized version of Faug\`ere's F4
algorithm for $\R/p^k\R$ in \gbl.

\bibliographystyle{elsarticle-harv}
\bibliography{bib.bib}

\end{document}